\theoremstyle{definition}
\let\oldsection\section
\renewcommand\section{\setcounter{equation}{0}\oldsection}
\newcommand{\be}{\begin{equation}\label}
\newcommand{\ee}{\end{equation}}
\newcommand{\beaa}{\begin{eqnarray}}
\newcommand{\eea}{\end{eqnarray}}
\newcommand{\nn}{\nonumber}
\newcommand{\Om}{\Omega}
\newcommand{\pa}{\partial}
\newcommand{\si}{\sigma}
\newcommand{\Lin}{L^\infty(\Om)}
\newcommand{\eps}{\varepsilon}
\newcommand{\al}{\alpha}
\newcommand{\into}{\int_{\Omega}}
\newcommand{\Tmax}{T_{max}}
\newcommand{\norm}[2][ ]{\|#2\|_{#1}}
\newcommand{\R}{\mathbb{R}}
\newcommand{\io}{\int_\Om}
\newcommand{\fat}{\qquad \mbox{for all }t\in(0,\Tmax)}
\newcommand{\fas}{\qquad \mbox{for all }s\in(0,\Tmax)}
\theoremstyle{plain}
\newtheorem{thm}{Theorem}[section]
\newtheorem{lem}[thm]{Lemma}
\theoremstyle{definition}
\theoremstyle{remark}
\title{Superlinear transmission in an indirect signal production chemotaxis system}
\author{ Xinru Cao \footnote{xcao@dhu.edu.cn, North Renmin Road 2999, 201620 Shanghai, China}\\
\small{ School of Mathematics and Statistics},\\
\small{ Donghua University}
}
\date{}
\begin{document}
\maketitle
\begin{abstract}
\noindent 
In this paper, the indirect signal production system with nonlinear transmission is considered
\[
\left\{
\begin{array}{lll}
& u_t = \Delta u-\nabla\cdot(u \nabla v), \\
\displaystyle
& v_t  =\Delta v-v+w,\\
\displaystyle
& w_t =\Delta w-w+ f(u)
\end{array}
\right.
\]
in a bounded smooth domain $\Omega\subset \R^n$ associated with homogenous Neumann boundary conditions{,  where} $f\in C^1([0,\infty))$ satisfies $0\le f(s) \le s^{\alpha}$ with {$\alpha>0$}. It is known from \cite{tao2024non_trans} that the system possesses a global bounded solution if {$0<\al<\frac 4n$} when $n\ge 4$. 
In the case $n\le 3$ and {if we} consider superlinear  transmission, no regularity of $w$ or $v$ can be derived directly. In this work, we show that if {$0<\al< \min\{\frac 4n,1+\frac 2n\}$}, the solution is global and bounded via an approach based on the maximal Sobolev regularity. \\
{\bf Keywords:} chemotaxis, boundedness, nonlinear transmission\\
{\bf Math Subject Classification (2010):} 35K55, 35B33, 92C17
\end{abstract}

\section{Introduction}\label{introduction}
The pioneer chemotaxis model was established by Keller and Segel in 1970's \cite{keller1971model}, to describe cells' motion which biased by a chemical substance secreted by the cells themselves. 

Recently, phenotype heterogeneity is assumed among the cells population, they are divided into two groups; one group secrets chemical signal, the other group is attracted by the signal. These abilities do not hold simultaneously, some kind of trade-off is observed between two groups. This model can be seen as an extension of the classical Keller-Segel model, with $u$ and $w$ presenting the density of ``chemotaxis" and ``secreting" groups, respectively, and $v$ is the concentration of chemical substance. Under these assumptions, the following system is proposed \cite{macfarlane2022impact}:
\begin{equation}
\label{eq}
\left\{
\begin{array}{lllll}
& u_t = \Delta u-\nabla\cdot(u \nabla v)-k_1 u+k_2 w,  &(x,t)\in \Omega\times(0,T),\\[2pt]
\displaystyle
& v_t  =\Delta v-v+w, &(x,t)\in \Omega\times(0,T),\\[2pt]
\displaystyle
& w_t =\Delta w-w+ f(u), &(x,t)\in \Omega\times(0,T),\\[2pt]
& \partial_{\nu} u= \partial_{\nu} v=\partial_{\nu} w=0, &(x,t)\in \par \Omega\times(0,T),\\[2pt]
 &u(x,0)=u_0(x), v(x,0)=v_0(x), w(x,0)=w_0(x), & x\in\Omega,
\end{array}
\right.
\end{equation}
where $\Omega\subset \R^n$ ($ n \ge 1$) is smooth and bounded, {$T>0$}, $\nu$ denotes the outer normal vector on $\pa\Omega$, $k_1, k_2\ge 0${,} and
 \begin{align}
\label{con:f}
&
	f\in C^1([0,\infty)) \text{ and } 0\le f(s)\le s^{\al}  \text{ with some } {\al>0},\\
	\label{con:initial}
	& u_0\in C^0(\overline{\Om}), v_0\in W^{2,\infty}(\Om), w_0\in W^{2,\infty}(\Omega) \text{ are nonnegative}.	
\end{align}
{Compared} to the classical Keller-Segel system, the indirect signal production mechanism in \eqref{eq}
 manifests a stronger tendency toward stabilization; it is proven {in \cite{fujie2017application,painter2023switching}} that in the case $f(u)\equiv u$, the solution of \eqref{eq} is bounded for $n\le 3$ , which is in contrast to the classical Keller-Segel system that there exist finite time blowup solutions in two and three dimensions \cite{WinklerFinitetimeBlowup2013}. 
 Considering {\em direct} nonlinear signal production model:
 \begin{equation}
 \label{eq:1}
 u_t=\Delta u-\nabla\cdot(u\nabla v), \quad v_t=\Delta v-v+f(u),
  \end{equation}
 where $f$ satisfies \eqref{con:f}, the value $\alpha$ determines boundedness or blow up of the solution; if $0<\alpha<\frac 2n$, the solution is global and bounded \cite{liu2016non_signal}; if $f(s)= s^{\alpha}$ with $\alpha>\frac 2n$ and the second equation is replaced by 
 $\Delta v-\frac{1}{|\Omega|}\into f(u(\cdot,t))+f(u)=0$ in \eqref{eq:1}, finite time blowup solution is constructed in radial symmetric setting \cite{winkler2018critical}.
 In the present work, we study {\em indirect} nonlinear signal production mechanism in \eqref{eq}, and obtain the following:
\begin{thm}\label{thm}
Let $ n \le 3$,  $k_1=k_2=0$, $\Omega\subset\R^n$ be bounded domain with smooth boundary, {$f$ satisfy \eqref{con:f}} and {let} $(u_0,v_0,w_0)$ comply with \eqref{con:initial}. If {$0<\al< \min\{\frac4n,1+\frac 2n\}$}, then the system \eqref{eq} admits a {unique} global classical solution $(u,v,w)$ satisfying
$$
(u,v,w)\in (C^0(\overline{\Om}\times[0,\infty))\cap 
C^{2,1}(\overline{\Om}\times(0,\infty)))^3, 
$$
which is  uniformly bounded.
\end{thm}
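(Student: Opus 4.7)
The plan is to prove the theorem via the standard scheme of local existence together with an extensibility criterion of the form ``if $\limsup_{t\uparrow T_{\max}}\|u(\cdot,t)\|_{L^\infty(\Om)}<\infty$ then $T_{\max}=\infty$'', so that the entire task reduces to deriving a uniform a priori bound on $\|u(\cdot,t)\|_{L^\infty(\Om)}$. Mass conservation (which holds because $k_1=k_2=0$) gives $\io u(\cdot,t)=\io u_0$, and integrating the $v$- and $w$-equations in space yields $\io w$ and $\io v$ bounded on bounded time intervals; these serve as the baseline integrability for a bootstrap argument.

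The central a priori estimate would be obtained by testing the $u$-equation with $p u^{p-1}$ for some $p\ge 2$, leading to
\[
\tfrac{d}{dt}\io u^p+\tfrac{4(p-1)}{p}\io|\na u^{p/2}|^2=-(p-1)\io u^p\Delta v.
\]
I would estimate the right-hand side by H\"older and Gagliardo--Nirenberg, writing $\io u^p|\Delta v|\le \|u^{p/2}\|_{L^{2r}(\Om)}^2\,\|\Delta v\|_{L^{r'}(\Om)}$ and interpolating $\|u^{p/2}\|_{L^{2r}(\Om)}$ between $\|\na u^{p/2}\|_{L^2(\Om)}$ and $\|u^{p/2}\|_{L^{2/p}(\Om)}=\|u\|_{L^1(\Om)}^{p/2}$.

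The key analytic input is the maximal Sobolev regularity for the Neumann heat semigroup, applied twice in succession. Applied first to the $w$-equation it yields
\[
\|\Delta w\|_{L^s(0,t;L^q(\Om))}\le C\|f(u)\|_{L^s(0,t;L^q(\Om))}+C\le C\|u\|_{L^{s\al}(0,t;L^{q\al}(\Om))}^{\al}+C,
\]
and combined with the Sobolev embedding $W^{2,q}(\Om)\embeddedinto L^{q^\star}(\Om)$ it supplies a bound for $w$ in $L^s(0,t;L^{q^\star}(\Om))$; a second application to the $v$-equation then produces a bound for $\|\Delta v\|_{L^s(0,t;L^{r'}(\Om))}$ in terms of a space-time norm of $u^{\al}$. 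Substituting this into the tested energy identity and absorbing the dissipation yields a superlinear Gronwall-type inequality for $y(t):=\io u^p$ whose coefficients remain integrable precisely when both $\al<\tfrac{4}{n}$ (arising from the Gagliardo--Nirenberg interpolation in the chemotactic term) and $\al<1+\tfrac{2}{n}$ (dictated by the two successive maximal-regularity steps composed with the Sobolev embedding in dimension $n\le 3$) hold, which is the exact range of the theorem.

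The principal obstacle, and the delicate bookkeeping of the argument, lies in the simultaneous choice of exponents $(p,r,s,q)$ ensuring that (a) the Gagliardo--Nirenberg interpolation leaves a sub-quadratic power of $\|\na u^{p/2}\|_{L^2(\Om)}$ on the right-hand side, (b) the double maximal-regularity estimate yields a finite bound for $\|\Delta v\|_{L^s(0,t;L^{r'}(\Om))}$ in terms of norms of $u^{\al}$ already under control, and (c) the resulting integral inequality for $y$ prevents blow-up at finite time. Once an $L^p$ bound for $u$ with some $p>n/2$ is secured, a standard semigroup / Moser-iteration argument applied to the variation-of-constants representation of the $u$-equation upgrades it to a time-uniform $L^\infty$ bound, from which global existence and uniform boundedness of the triple $(u,v,w)$ in the claimed function space follow.
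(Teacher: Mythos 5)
Your outline follows the same overall strategy as the paper: reduce to an a priori $L^\infty$ bound via the extensibility criterion, test the first equation with $pu^{p-1}$, control $\io u^p\Delta v$ by H\"older and Gagliardo--Nirenberg, feed $\|\Delta v\|$ through two successive applications of maximal Sobolev regularity (first for $w$, then for $v$), and finish with Moser iteration. However, what you present is a plan whose decisive step is explicitly deferred: you yourself identify ``the simultaneous choice of exponents $(p,r,s,q)$'' as the principal obstacle, and that choice is precisely the content of the paper's Lemma \ref{lem:parameters} and the verifications $\eta_1+\eta_2<1$, $\beta\eta_1<1$, $\beta'\eta_2<1$, $\eta_3<1$. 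Without carrying this out, nothing shows that the admissible range is $\al<\min\{\frac4n,1+\frac2n\}$ rather than something smaller, so the proposal does not yet constitute a proof. Moreover, the specific mechanism you sketch for the middle step differs from the one the paper actually needs. After the first maximal-regularity application you pass from $\Delta w$ to $w$ by the Sobolev embedding $W^{2,q}\embeddedinto L^{q^\star}$; the paper instead interpolates $\norm[L^{\theta'}(\Om)]{w}\le C\norm[L^\tau(\Om)]{\Delta w}^{b}\norm[L^1(\Om)]{w}^{1-b}+C\norm[L^1(\Om)]{w}$, paying only the fraction $b$ of the cost at the expensive $L^\tau$ level and charging the remaining fraction $1-b$ to the cheap quantity $\norm[L^1(\Om)]{w}$, which Lemma \ref{lem:wL1_u} bounds by a time convolution of $\norm[L^1(\Om)]{u^{\al}}=\norm[L^{\al}(\Om)]{u}^{\al}$ (handled by the Fubini-type Lemma \ref{lem:fubini} and an extra H\"older splitting in time with exponents $\beta,\beta'$). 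This use of the $L^1$ bound of $w$ is where the indirect-production structure is exploited; your sketch never uses it.

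The second concrete problem is the closing step. A ``superlinear Gronwall-type inequality for $y(t)=\io u^p$ with integrable coefficients'' does not in general preclude finite-time blow-up, and even where it does it only yields bounds on compact time intervals, not the uniform-in-time boundedness claimed in the theorem. The paper's mechanism is different: it retains the exponentially weighted dissipation functional $\int_0^t e^{-\delta(t-s)}\norm[L^2(\Om)]{\nabla u^{p/2}(\cdot,s)}^2\,ds$ on the left-hand side and shows (Lemmata \ref{lem:u_nablau}--\ref{lem:laplacianv_nablau}) that the chemotactic contribution is bounded by \emph{strictly sublinear} powers of this same functional, so that Young's inequality absorbs it and leaves $\into u^p(\cdot,t)\le C$ for all $t\in(0,\Tmax)$ with $C$ independent of $t$. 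The exponential weights $e^{-\delta(t-s)}$ are not cosmetic: they are what converts the space-time estimates into time-uniform ones. To turn your proposal into a proof you would need to (i) replace the Gronwall framing by this absorption argument, and (ii) either verify that the pure Sobolev-embedding route still produces sublinear exponents on the full range $\al<\min\{\frac4n,1+\frac2n\}$, or adopt the paper's interpolation against $\norm[L^1(\Om)]{w}$.
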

Noting that the condition on $\alpha$ will allow superlinear transmission in the third equation, no integrable regularity is known for the production term $f(u)$, we can not gain any information about $w$. The strategy of the proof is to track the evolution of $\into u^p(\cdot,t)$, and run a loop through each equation via the maximal Sobolev regularity.
\section{Preliminaries }
\label{sec:pre}

Let us first give local existence theory for \eqref{eq}. It can be proven by fix point argument and Schauder theory.
\begin{lem}\label{lem:locexist}
Assume that $\Om\subset\R^n$ $(n\ge 1)$ is a bounded domain with smooth boundary, {$f$ satisfies \eqref{con:f} and }that the initial data $(u_0,v_0,w_0)$
satisfy \eqref{con:initial}. 
There exists $\Tmax\in(0,\infty]$ 
with the property that the problem possess a unique classical 
solution $(u,v,w)$ 
satisfy 
$$
(u,v,w)\in (C^0(\overline{\Om}\times[0,\Tmax))\cap 
C^{2 ,1}(\overline{\Om}\times(0,\Tmax)))^3,  {\text{ and }  u,v,w>0  \text{ in } \overline{\Omega}\times(0,\Tmax).}
$$
Moreover, if $\Tmax<\infty$, then
$
\norm[\Lin]{u(\cdot,t)}\to 
\infty, 
\text { as }t\nearrow \Tmax. 
$
\end{lem}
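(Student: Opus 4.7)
The plan is a standard cascading fixed-point construction that exploits the triangular structure of \eqref{eq}: once $u$ is prescribed, the problems for $w$, then $v$, and then $u$ itself decouple into a sequence of linear parabolic equations that can be treated in turn.

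First I would fix $R>\norm[\Lin]{u_0}+1$, $\beta\in(0,1)$ and $T\in(0,1]$ to be chosen small, and work on the closed convex set
$$
S_T := \bigl\{\widetilde u\in C^0(\Ombar\times[0,T]) \,:\, 0\le \widetilde u\le R,\ \widetilde u(\cdot,0)=u_0\bigr\}
$$
equipped with the $\Lin$-topology. The map $\Phi:S_T\to C^0(\Ombar\times[0,T])$ would be defined as $\Phi(\widetilde u):=u$, obtained through three steps:
(i) Solve the linear heat problem $w_t=\Lap w-w+f(\widetilde u)$ under Neumann conditions with datum $w_0$; since $f(\widetilde u)$ is bounded and nonnegative, maximal $L^p$-regularity and Sobolev embedding provide $w\in C^{\beta,\beta/2}(\Ombar\times[0,T])$, with $w\ge 0$ by the maximum principle.
(ii) Solve $v_t=\Lap v-v+w$, similarly obtaining $v\ge 0$ and $\na v\in C^{\beta,\beta/2}(\Ombar\times[0,T])$.
(iii) Solve the linear drift-diffusion problem $u_t=\Lap u-\na\cdot(u\na v)-k_1u+k_2w$ under Neumann conditions with datum $u_0$; with a Hölder drift $\na v$ and bounded source $w$, maximal regularity followed by classical Schauder estimates yields a unique $u\in C^{2+\beta,1+\beta/2}(\Ombar\times(0,T])\cap C^0(\Ombar\times[0,T])$.

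The next step is to verify that $\Phi$ is a self-map on $S_T$ and a contraction in the $\Lin$-topology for $T$ small; this follows from a routine estimate using the local Lipschitz continuity of $f$ on $[0,R]$ and the short-time smoothing factors $O(T^\gamma)$ produced by two successive applications of the Neumann heat semigroup. Banach's fixed-point theorem then delivers the unique local classical solution, and strict positivity on $\Ombar\times(0,\Tmax)$ is an immediate consequence of the strong maximum principle. The extensibility criterion is obtained by the standard blow-up dichotomy: were $\norm[\Lin]{u(\cdot,t)}$ bounded on $(0,\Tmax)$ with $\Tmax<\nifty$, the cascade above would give uniform Hölder bounds on $w$, $v$ and $\na v$ up to $\Tmax$, allowing one to restart the local procedure near $t=\Tmax$ and contradict the maximality of $\Tmax$.

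I expect the main obstacle to be step (iii): producing a Hölder-continuous drift $\na v$ from the merely $L^\infty$ source $f(u)$ requires two successive applications of parabolic $L^p$-maximal regularity with $p$ sufficiently large, together with the Sobolev embedding $W^{2,1}_p(\Om\times(0,T))\embeddedinto C^{1+\beta,(1+\beta)/2}(\Ombar\times[0,T])$. This chain is precisely the mechanism that renders Schauder theory applicable to the $u$-equation, and it is also the template for the global bootstrap argument to be carried out in the proof of Theorem \ref{thm}.
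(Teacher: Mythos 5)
Your proposal is correct and follows exactly the route the paper itself indicates: the paper gives no written proof of Lemma \ref{lem:locexist}, stating only that it ``can be proven by fix point argument and Schauder theory,'' and your cascaded Banach fixed-point construction with the $w\to v\to u$ regularity chain, the $L^\infty$-contraction for small $T$, and the standard blow-up dichotomy is the standard realization of that remark (cf.\ the local theory in \cite{fujie2017application,cao_tao2021}). The only point worth tightening in a full write-up is step (iii): to apply Schauder to $u_t=\Lap u-\na v\cdot\na u-u\Lap v-k_1u+k_2w$ you need $\Lap v$ (not just $\na v$) H\"older continuous, which as you note is obtained by one more pass of maximal $L^p$-regularity and a posteriori bootstrapping rather than in the first iteration.
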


The next lemma can be proven by simply integrating over $\Omega$. The second result is of particular interest for the superlinear transmission case, that no regularity for $w$ is known, however, its $L^1(\Omega)$-norm can be estimated by $u$ non-locally in time.
\begin{lem}\label{lem:wL1_u}
Let $s\in(0,\Tmax)$. {There hold that}
	\begin{align}
	\label{eq:lem:uL1}
	& \norm[L^1(\Omega)]{u(\cdot,s)} =\norm[L^1(\Omega)]{u_0},\\
	\label{eq:lem:wL1_u}
		&\norm[L^1(\Omega)]{ w(\cdot,s)} \le {\int_0^s e^{-(s-\sigma)} \norm[L^1(\Om)]{u^{\al}(\cdot,\sigma)} d \sigma +\norm[L^1(\Omega)]{w_0} }\fas.
	\end{align}
\end{lem}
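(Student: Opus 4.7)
The plan is to integrate the first and third equations of \eqref{eq} over $\Omega$ and exploit the homogeneous Neumann boundary conditions to eliminate the diffusion and flux contributions. For the mass identity \eqref{eq:lem:uL1}, I would integrate the first equation $u_t = \Delta u - \nabla\cdot(u\nabla v)$ (recall $k_1=k_2=0$) over $\Omega$. Applying the divergence theorem, the integral of $\Delta u$ vanishes by $\partial_\nu u = 0$, and the integral of $\nabla\cdot(u\nabla v)$ vanishes by $\partial_\nu v = 0$. Hence $\tfrac{d}{dt}\int_\Omega u\,dx = 0$, which after integration in time yields \eqref{eq:lem:uL1}. The regularity required to differentiate under the integral sign and to use the divergence theorem is supplied by Lemma \ref{lem:locexist}.

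For the second assertion, I would set $y(s) := \int_\Omega w(\cdot, s)\,dx$ and integrate the third equation $w_t = \Delta w - w + f(u)$ over $\Omega$. The $\Delta w$ term disappears by $\partial_\nu w = 0$, and since $0\le f(u)\le u^{\alpha}$ by \eqref{con:f}, we arrive at the scalar differential inequality
\begin{equation*}
y'(s) + y(s) \;\le\; \int_\Omega u^{\alpha}(\cdot,s)\,dx \qquad \text{for all } s\in(0,\Tmax).
\end{equation*}
Multiplying through by $e^s$ and integrating from $0$ to $s$ yields
\begin{equation*}
y(s) \;\le\; e^{-s} y(0) + \int_0^s e^{-(s-\sigma)}\,\|u^{\alpha}(\cdot,\sigma)\|_{L^1(\Omega)}\,d\sigma,
\end{equation*}
and since $e^{-s}\le 1$ and $y(0) = \|w_0\|_{L^1(\Omega)}$, the bound \eqref{eq:lem:wL1_u} follows.

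I do not anticipate any real obstacle: both statements reduce to a one-line ODE/ODI argument once integration over $\Omega$ has removed the spatial derivatives. The only point requiring minor attention is to confirm that the solution has enough regularity to justify the use of the divergence theorem and Fubini on $[0,s]\times\Omega$; this is already guaranteed by the $C^0(\overline{\Omega}\times[0,\Tmax))\cap C^{2,1}(\overline{\Omega}\times(0,\Tmax))$-regularity stated in Lemma \ref{lem:locexist}. No maximal Sobolev regularity or other machinery is needed here; the estimate is precisely a \emph{time-nonlocal} replacement for the missing pointwise bound on $w$, which is what will make the loop argument sketched after Theorem \ref{thm} possible.
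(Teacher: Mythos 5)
Your proof is correct and is exactly the argument the paper intends: the paper offers no written proof, remarking only that the lemma ``can be proven by simply integrating over $\Omega$,'' and your integration of the first and third equations combined with the integrating-factor/ODI step fills in precisely those details.
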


The following result on maximal Sobolev regularity is the key tool for us to treat the term with space-time integrations. The proof can be found in \cite[Lemma 2.3]{cao_tao2021}.
\begin{lem}
\label{lem:maximal}
For any given $q,r\in(1,\infty)$ and for any fixed $\kappa\in(0,1)$,
there exists $C=C(q,r,\kappa)>0$ with the
following property: \\
For all $T>0$, let $h\in C^0(\overline{\Omega}\times(0,T))$ and let
$z$ be a classical solution of
  \begin{equation}
        \left\{ \begin{array}{lll}
     &z_t =\Delta z-z +h,
    \qquad & (x, t)\in \Omega\times (0, T), \\[1mm]
   & \partial_{\nu} z =0,
    \qquad & (x, t)\in \partial\Omega\times (0, T), \\[1mm]
      &z(x,0)=z_0(x),     \qquad & x\in\Omega,
    \end{array} \right.
\end{equation}
 then we have
  \begin{align}
  \int_0^t e^{-\kappa r(t-s)}\|\Delta z(\cdot,s)\|_{L^q(\Omega)}^r ds
 \le C\int_0^t e^{-\kappa r(t-s) }\|h(\cdot,s)\|_{L^q(\Omega)}^r ds
     +C\|z_0\|_{W^{2, q}(\Omega)}^r
  \end{align}
  for all $t\in (0, T)$.
\end{lem}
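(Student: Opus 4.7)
\textbf{Plan of proof of Lemma \ref{lem:maximal}.} The plan is to reduce the weighted inequality to the classical (unweighted) maximal $L^r$--$L^q$ parabolic regularity theorem via an exponential change of the unknown. The weight $e^{-\kappa r(t-s)}$ and the gain factor $\kappa\in(0,1)$ are the precise outputs of this substitution: losing a fraction $\kappa$ of the zeroth-order damping $-z$ in the original equation produces a new equation whose operator still has a strictly positive spectral bottom $(1-\kappa)$, which is exactly what makes the maximal regularity constant independent of $T$.

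Concretely, I would fix $t\in(0,T)$ and introduce
\[
\zeta(x,s):=e^{\kappa s}z(x,s), \qquad s\in(0,t).
\]
A direct computation, using $z_s=\Delta z-z+h$, gives
\[
\zeta_s=\Delta\zeta-(1-\kappa)\zeta+e^{\kappa s}h,\qquad \partial_\nu\zeta\big|_{\partial\Omega}=0,\qquad \zeta(\cdot,0)=z_0.
\]
Since $1-\kappa>0$, the Neumann realization of $-\Delta+(1-\kappa)I$ in $L^q(\Omega)$ is sectorial with $0$ in its resolvent set, hence admits bounded $H^\infty$ calculus; the classical maximal Sobolev regularity theorem (e.g.\ Hieber--Prüss, or Giga--Sohr) then yields a constant $\tilde C=\tilde C(q,r,\kappa)$, \emph{independent of $t$}, such that
\[
\int_0^t\|\Delta\zeta(\cdot,s)\|_{L^q(\Omega)}^r\,ds\;\le\;\tilde C\int_0^t\|e^{\kappa s}h(\cdot,s)\|_{L^q(\Omega)}^r\,ds+\tilde C\|z_0\|_{W^{2,q}(\Omega)}^r.
\]

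Since $\Delta\zeta(\cdot,s)=e^{\kappa s}\Delta z(\cdot,s)$, multiplying the previous line by $e^{-\kappa r t}$ and using $e^{-\kappa r t}\le 1$ on the initial-data term immediately produces
\[
\int_0^t e^{-\kappa r(t-s)}\|\Delta z(\cdot,s)\|_{L^q(\Omega)}^r\,ds\;\le\; \tilde C\int_0^t e^{-\kappa r(t-s)}\|h(\cdot,s)\|_{L^q(\Omega)}^r\,ds+\tilde C\|z_0\|_{W^{2,q}(\Omega)}^r,
\]
which is the desired estimate with $C=\tilde C$. The embedding $W^{2,q}(\Omega)\hookrightarrow B^{2(1-1/r)}_{q,r}(\Omega)$ covers the (slightly weaker) optimal trace space that usually appears in the initial-data term.

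The only delicate point is justifying that $\tilde C$ does not blow up with $t$: this is the reason one cannot simply apply maximal regularity for $\partial_s-\Delta$, whose constant would in principle depend on the length of the time interval. The damping $(1-\kappa)I$ extracted from the original equation produces a shift of the spectrum strictly into the right half plane, and a standard rescaling/semigroup argument (or a direct computation of the corresponding operator-valued Fourier multiplier) shows that the maximal regularity constant depends only on $q$, $r$ and $\kappa$ via $1-\kappa$. Everything else is routine once this point is in place.
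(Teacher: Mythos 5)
Your argument is correct and is essentially the standard proof of this lemma: the paper itself does not prove it but cites \cite[Lemma 2.3]{cao_tao2021}, where the same exponential substitution $\zeta=e^{\kappa s}z$ is used to produce the damped operator $-\Delta+(1-\kappa)$ and then the $T$-independent maximal $L^r$--$L^q$ regularity of Giga--Sohr/Hieber--Pr\"uss is invoked. Your attention to the two delicate points (the trace-space embedding $W^{2,q}(\Omega)\hookrightarrow B^{2(1-1/r)}_{q,r}(\Omega)$ and the independence of the constant from $t$ via the spectral shift) is exactly what makes the reduction work.
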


As the final preparation, we present a fundamental estimate to treat a term with double integration in time.
\begin{lem}
\label{lem:fubini}
	Let $\delta\in(0,1)$, $q\ge 1$, $T>0$ and let $ g \in  L^\infty((0,T))$ be nonnegative. {It holds that}
	\begin{align}\nn
		&~~\int_0^t e^{-\delta(t-s)}\left(\int_0^s e^{-(s-\sigma)} g(\si ) d\sigma \right)^{q} ds\le \frac{1}{1-\delta}\int_0^t e^{-\delta(t-s)} g^q(s) ds  \text{ for all } t\in(0,T). \end{align}
\end{lem}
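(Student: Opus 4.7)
The strategy is to first remove the $q$-th power via a Hölder (Jensen-type) estimate against the exponential weight, and then exchange the order of integration using Fubini's theorem.

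First, I would split the weight $e^{-(s-\sigma)}$ as $e^{-(s-\sigma)(1-1/q)}\cdot e^{-(s-\sigma)/q}$ and apply Hölder's inequality with conjugate exponents $q/(q-1)$ and $q$ to the inner integral. Since $\int_0^s e^{-(s-\sigma)}d\sigma\le 1$, this yields the pointwise bound
\begin{equation*}
\left(\int_0^s e^{-(s-\sigma)}g(\sigma)\,d\sigma\right)^{q}\le \int_0^s e^{-(s-\sigma)}g^{q}(\sigma)\,d\sigma\qquad\text{for all }s\in(0,T).
\end{equation*}
(For $q=1$ this is trivial.) Inserting this estimate into the left-hand side reduces the problem to controlling
\begin{equation*}
I(t):=\int_0^t e^{-\delta(t-s)}\int_0^s e^{-(s-\sigma)}g^{q}(\sigma)\,d\sigma\,ds.
\end{equation*}

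Next, Fubini's theorem on the region $\{0\le\sigma\le s\le t\}$ allows me to rewrite
\begin{equation*}
I(t)=\int_0^t g^{q}(\sigma)\int_\sigma^t e^{-\delta(t-s)-(s-\sigma)}\,ds\,d\sigma.
\end{equation*}
The inner $s$-integral is explicit: a direct computation using $\delta\in(0,1)$ gives
\begin{equation*}
\int_\sigma^t e^{-\delta(t-s)-(s-\sigma)}\,ds=\frac{e^{-\delta(t-\sigma)}-e^{-(t-\sigma)}}{1-\delta}\le \frac{1}{1-\delta}\,e^{-\delta(t-\sigma)},
\end{equation*}
where in the last step I discard the (nonnegative, since $\delta<1$) subtracted term. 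Substituting back into $I(t)$ produces the claimed estimate.

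The only subtle point is choosing the right Hölder splitting of the exponential weight so that the same kernel $e^{-(s-\sigma)}$ appears after raising to the $q$-th power; after that, the calculation is a routine Fubini argument plus the explicit exponential integral. There is no real obstacle — the lemma is essentially a quantitative version of the classical Young-type inequality $\|e^{-t}\ast g\|_{L^q}\lesssim \|g\|_{L^q}$ adapted to a damped time window weighted by $e^{-\delta(t-\cdot)}$.
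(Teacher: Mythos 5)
Your proof is correct and follows essentially the same route as the paper: the Hölder splitting of the kernel to reduce to the $q$-th power of $g$ inside the convolution, followed by Fubini and the explicit evaluation of the inner exponential integral. The paper bounds $\int_\sigma^t e^{-(1-\delta)s}\,ds \le \frac{1}{1-\delta}e^{-(1-\delta)\sigma}$ where you drop the nonnegative subtracted term, which is the same estimate.
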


\begin{proof}
	First we apply { H\"older's} inequality to find that
	\begin{align*}
		\left(\int_0^s e^{-(s-\si )} g(\si ) d\si \right)^q\le \left(\int_0^s e^{-(s-\si)} g^q(\si ) d\si \right) \left(\int_0^s e^{-(s-\si)} d\si\right)^{q(1-\frac 1q)}\le \int_0^s e^{-(s-\si)} g^q(\si ) d\si .
	\end{align*}
	Thanks to the Fubini Theorem, we have
	\begin{align*}
	\nn
&~~~~\int_0^t e^{-\delta(t-s)}\left(\int_0^s e^{-(s-\sigma)} g(\si ) d\sigma \right)^{q} ds\\
		&\le \int_0^t e^{-\delta(t-s)} \int_0^s e^{-(s-\si)} g^q(\si ) d\si ds = \int_0^t e^{-\delta t+\si} g^q(\si )\int_\si^t  e^{-(1-\delta)s} ds d\si \le \frac{1}{1-\delta} \int_0^t e^{-\delta(t-\si )} g^q(\si) d\si .
	\end{align*}
\end{proof}



\section{Proof of Theorem \ref{thm}}
 In order to show boundedness of $\norm[L^p(\Omega)]{u}$ for arbitrarily large $p$, we need to fix parameters properly, {the choices of the parameters} are guaranteed by the condition on $\alpha$. 
\begin{lem}
	\label{lem:parameters}
	Let $n\le 3$. Suppose that $1<\al<\min\{\frac 4n,1+\frac 2n\}$. We can fix 
	\begin{align}
		\label{para:tau}
		&\tau>1 \text{ satisfying } \al-\frac 2n<\frac{1}{\tau}<\min\left\{1,\frac 2n\right\}, \\
		\label{para:theta}
		& \theta'>1 \text{ be such that } \frac{1}{\tau}-\min\left\{1,\frac 2n\right\}<\frac{1}{\theta'}< \frac{1}{\tau}+\frac 2n-\al,  \text{ and } \quad\frac{1}{\theta}+\frac{1}{\theta'}=1.\\
		\label{para:p}
		\text{ {Let} } & b:=\frac{n-\frac{n}{\theta'}}{2-\frac{n}{\tau}+n} \in(0,1), \quad
			 p> \max\left\{1,\frac{\al(n-2)_+}{n}, \frac{\alpha\tau (n-2)_+}{n},\frac{\frac 2n-\frac{1}{\theta'}}{1-b}+1-\frac 2n\right\},\\
		\label{para:a_b_lambda}
		& a:=\frac{\frac{np}{2}-\frac{n}{2\theta}}{1-\frac{n}{2}+\frac{np}{2}} \in(0,1), \quad \lambda=2a\in(0,2),\quad  \\
		\label{para:eta}
		\text{ and that }
			&\eta_1:=\frac{n\frac{2}{2-\lambda}b(\al-\frac{1}{\tau})}{2-n+np}>0, \quad \eta_2:=\frac{n\frac{2}{2-\lambda}(1-b)(\al-1)}{2-n+np}>0, \quad \eta_3:=\frac{n\frac{2}{2-\lambda}(\al-1)}{2-n+np}\in(0,1).
		\end{align}
	We can find $\beta,\beta'\in(1,\infty)$ satisfying $ \frac{1}{\beta}+\frac{1}{\beta'}=1$, and that
\begin{align}
\label{para:<1}
0<\eta_1+\eta_2<1,
	\quad
\beta\eta_1<1, \quad \beta'\eta_2<1.
\end{align}
\end{lem}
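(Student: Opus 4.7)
My plan is to pick the parameters sequentially, showing that each inequality in the list is consistent with the earlier choices and with the two standing hypotheses $\alpha<4/n$ and $\alpha<1+2/n$. The whole lemma is bookkeeping; no new estimate beyond the arithmetic of the bounds on $\alpha$ is needed. I expect the main obstacle not to be any single inequality but rather the effort of simplifying the $\eta_i$ to a form that clearly depends only on $\alpha$, $\tau$ and $\theta$, so that $p$ drops out at the end.

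Step~1 (choice of $\tau$). The interval for $1/\tau$ is $(\alpha-2/n,\min\{1,2/n\})$. The hypothesis $\alpha<4/n$ gives $\alpha-2/n<2/n$ and $\alpha<1+2/n$ gives $\alpha-2/n<1$, so this interval is nonempty; since $\alpha>1>0$ it can be chosen inside $(0,1)$, producing $\tau>1$. Step~2 (choice of $\theta'$). The lower endpoint $1/\tau-\min\{1,2/n\}$ is nonpositive by Step~1, while the upper endpoint $1/\tau+2/n-\alpha$ is positive because $1/\tau>\alpha-2/n$; I then fit $1/\theta'\in(0,1)$ inside the interval. Notice moreover that $1/\tau<1<\alpha$ forces $1/\theta'<2/n$, i.e.\ $\theta'>n/2$, a fact I use later for $a<1$. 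Step~3 (verification $b\in(0,1)$). Since $1/\theta+1/\theta'=1$, one has $n-n/\theta'=n/\theta$, so $b=(n/\theta)/(2+n-n/\tau)>0$; the bound $b<1$ reduces to $n/\theta<2+n-n/\tau$, which follows from $1/\theta<1$ and $1/\tau\le 2/n$.

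Step~4 (choice of $p$, $a$, $\lambda$). The finitely many lower bounds on $p$ allow me to pick $p$ as large as needed; then $a>0$ and the denominator $1-n/2+np/2$ is positive, while $a<1$ is equivalent to $1-n/2+n/(2\theta)>0$, i.e.\ $\theta'>n/2$, secured in Step~2. Thus $\lambda=2a\in(0,2)$. Step~5 ($\eta_i$). A short computation gives
\begin{equation*}
\frac{2}{2-\lambda}\cdot\frac{1}{2-n+np}=\frac{1}{2-n+n/\theta},
\end{equation*}
so the $p$-dependence cancels and
\begin{equation*}
\eta_1=\frac{nb(\alpha-1/\tau)}{2-n+n/\theta},\qquad \eta_2=\frac{n(1-b)(\alpha-1)}{2-n+n/\theta},\qquad \eta_3=\frac{n(\alpha-1)}{2-n+n/\theta}.
\end{equation*}
The denominator is positive because $1/\theta'<2/n$; the numerators are positive because $\alpha>1>1/\tau$. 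The lower bound $1/\theta>1+\alpha-1/\tau-2/n$ from Step~2 rewrites as $2-n+n/\theta>n(\alpha-1)+n(1-1/\tau)$, which immediately yields $\eta_3<1$.

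Step~6 ($\eta_1+\eta_2<1$ and choice of $\beta,\beta'$). Using $b\in(0,1)$ and $1/\tau<1$,
\begin{equation*}
b(\alpha-1/\tau)+(1-b)(\alpha-1)=(\alpha-1)+b(1-1/\tau)<(\alpha-1)+(1-1/\tau),
\end{equation*}
and the same lower bound on $1/\theta$ used for $\eta_3$ gives $\eta_1+\eta_2<1$. Since $\eta_1,\eta_2\in(0,1)$ with $\eta_1+\eta_2<1$, I pick any $(1/\beta,1/\beta')$ with $1/\beta+1/\beta'=1$, $1/\beta>\eta_1$ and $1/\beta'>\eta_2$; this produces $\beta,\beta'\in(1,\infty)$ satisfying $\beta\eta_1<1$ and $\beta'\eta_2<1$, completing the proof. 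The hardest part, then, is really Step~2 (the simultaneous two-sided constraint on $1/\theta'$) together with the cancellation identity $2/((2-\lambda)(2-n+np))=1/(2-n+n/\theta)$ in Step~5; once those are in place the rest is a direct application of the endpoint bounds produced in Steps~1--2.
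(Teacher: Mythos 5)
Your proposal is correct and follows essentially the same route as the paper's proof: verify non‑emptiness of the intervals for $1/\tau$ and $1/\theta'$ from $\alpha<\min\{\tfrac4n,1+\tfrac2n\}$, observe $1/\theta'<2/n$ to get $a\in(0,1)$ and $b\in(0,1)$, use the cancellation $\tfrac{2}{2-\lambda}\cdot\tfrac{1}{2-n+np}=\tfrac{1}{2-n+n/\theta}$ to reduce the $\eta_i$ to $p$-free expressions, and bound $\eta_1+\eta_2<\tfrac{\alpha-1/\tau}{2/n-1/\theta'}<1$ before choosing $1/\beta\in(\eta_1,1-\eta_2)$. Your Step 6 identity $bA+(1-b)B=B+b(A-B)<A$ is just a cleaner phrasing of the inequality the paper establishes by direct expansion; no substantive difference.
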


\begin{proof}
	First $n\le 3$ and $1<\al<\min\left\{\frac 4n,1+\frac2n\right\}$ {yields} that $\al-\frac 2n<\min\left\{1,\frac 2n\right\}$. Hence we can fix $\tau$ and $\theta'$ as in \eqref{para:tau} and (\ref{para:theta}). Moreover, we see that $\frac{1}{\theta'}<\frac{2}{n}$, and $\frac{1}{\theta}>(1-\frac 2n)_+$, which implies that $a\in(0,1)$, as well as $\lambda\in(0,2)$. Noting that (\ref{para:theta}) ensures 
	\begin{align}
	\label{eq:lem:para:1}
\al-\frac2n<\frac{1}{\tau}-\frac{1}{\theta'}<\min\left\{1,\frac 2n\right\}.
\end{align} 
Hence $b\in(0,1)$. 
And \eqref{eq:lem:para:1} also implies $\al<1+\frac 2n-\frac 1{\theta'}$ because {of} $\tau>1$, then $\eta_3\in(0,1)$. {Furthermore},  we have $0<\al-\frac{1}{\tau}<\frac2n-\frac1{\theta'}$,  meaning that
\begin{align*}
\nn
\eta_1+\eta_2
	 &=\frac{(1-\frac 1{\theta'})(\al-\frac 1\tau)}{(\frac{2}{n}-\frac{1}{\theta'})(1-\frac{1}{\tau}+\frac 2n)} +\frac{(\frac 2n-\frac 1\tau+\frac{1}{\theta'})(\al-1)}{(\frac{2}{n}-\frac{1}{\theta'})(1-\frac{1}{\tau}+\frac 2n)}
	 <\frac{\alpha-\frac{1}{\tau}}{\frac 2n-\frac{1}{\theta'}}<1.
\end{align*}
Therefore, we can fix $\beta \in  (\frac{1}{1-\eta_2},\frac{1}{\eta_1})$. It is convenient to check the inequalities in (\ref{para:<1}) hold.
\end{proof}

Via standard $L^p$ testing procedure, our main idea is to control the cross-diffusion contributions toward dissipation of $u$. Let us first prepare the following spatio-temporal estimate by simple interpolation.
\begin{lem}
\label{lem:u_nablau}
Let $\al>0$,  $\mu>0$, $\delta>0$ and $\frac{1}{\alpha}<k< \frac{np}{\alpha(n-2)_+}$ satisfy $ n\mu(\al-\frac 1k)\le np-n+2$. There exists $C>0$ such that for all $t\in(0,\Tmax)$,
	\begin{align}\label{eq:lem:spa_temp}
		\int_0^t e^{-\delta(t-s)} \norm[L^k(\Omega)]{u^\al(\cdot,s)}^\mu ds\le C\left(\int_0^t e^{-\delta(t-s)} \norm[L^2(\Omega)]{\nabla u^{\frac p2}(\cdot,s)}^2 ds \right)^{\frac{n\mu(\al-\frac 1k)}{np-n+2}}+C.
	\end{align}
\end{lem}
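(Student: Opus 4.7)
The plan is to apply the Gagliardo--Nirenberg interpolation inequality pointwise in time to $u^{p/2}$, combine it with the conservation of $\|u(\cdot,s)\|_{L^1(\Omega)}$ from Lemma \ref{lem:wL1_u}, and then close the estimate by a Hölder argument in time against the weight $e^{-\delta(t-s)}$. Writing $\phi:=u^{p/2}$ one has the identity
\begin{equation*}
\|u^{\al}(\cdot,s)\|_{L^k(\Om)}^{\mu}=\|\phi(\cdot,s)\|_{L^{2\al k/p}(\Om)}^{2\al\mu/p},
\end{equation*}
so the assumption $k>\frac1\al$ guarantees $\frac{2\al k}{p}\ge \frac{2}{p}$ and the assumption $k<\frac{np}{\al(n-2)_+}$ keeps $\frac{2\al k}{p}$ strictly below the critical Sobolev exponent when $n\ge 3$. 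Hence Gagliardo--Nirenberg between $L^{2\al k/p}(\Om)$, $W^{1,2}(\Om)$ and $L^{2/p}(\Om)$ is admissible with interpolation exponent
\begin{equation*}
\theta=\frac{np\bigl(1-\frac{1}{\al k}\bigr)}{np-n+2}\in(0,1).
\end{equation*}

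Next, I would raise the resulting inequality to the power $\frac{2\al\mu}{p}$, use $\|\phi(\cdot,s)\|_{L^{2/p}(\Om)}=\|u(\cdot,s)\|_{L^1(\Om)}^{p/2}=\|u_0\|_{L^1(\Om)}^{p/2}$ by the mass identity \eqref{eq:lem:uL1}, and absorb these bounded quantities into a constant. This produces the pointwise-in-time inequality
\begin{equation*}
\|u^{\al}(\cdot,s)\|_{L^k(\Om)}^{\mu}\le C\,\|\na u^{p/2}(\cdot,s)\|_{L^2(\Om)}^{\gamma}+C
\qquad\text{with}\qquad \gamma:=\frac{2\al\mu\theta}{p}=\frac{2n\mu(\al-\frac1k)}{np-n+2}.
\end{equation*}

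Finally, I would multiply by $e^{-\delta(t-s)}$, integrate over $s\in(0,t)$, and invoke the hypothesis $n\mu(\al-\frac1k)\le np-n+2$, which is precisely $\gamma\le 2$. Splitting the weight as $e^{-\delta(t-s)}=\bigl(e^{-\delta(t-s)}\bigr)^{\gamma/2}\bigl(e^{-\delta(t-s)}\bigr)^{1-\gamma/2}$ and applying Hölder's inequality in $s$ with conjugate exponents $\frac{2}{\gamma}$ and $\frac{2}{2-\gamma}$ yields
\begin{equation*}
\int_0^t e^{-\delta(t-s)}\|\na u^{p/2}(\cdot,s)\|_{L^2(\Om)}^{\gamma}\,ds \le \Bigl(\int_0^t e^{-\delta(t-s)}\|\na u^{p/2}(\cdot,s)\|_{L^2(\Om)}^{2}\,ds\Bigr)^{\gamma/2}\Bigl(\int_0^t e^{-\delta(t-s)}ds\Bigr)^{1-\gamma/2},
\end{equation*}
and the last factor together with $\int_0^t e^{-\delta(t-s)}ds\le\frac{1}{\delta}$ gives the stated bound with exponent $\gamma/2=\frac{n\mu(\al-\frac1k)}{np-n+2}$.

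The only delicate point is checking that the Gagliardo--Nirenberg interpolation parameter satisfies $\theta\in(0,1)$; this is exactly what the two-sided bound on $k$ encodes, with the notation $(n-2)_+$ absorbing the case $n\le 2$ where the upper bound is vacuous. Everything else is routine manipulation of conjugate exponents.
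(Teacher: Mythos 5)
Your argument is correct and essentially identical to the paper's proof: the same Gagliardo--Nirenberg interpolation of $u^{p/2}$ between $L^2$-gradient and $L^{2/p}$ norms (your $\theta$ equals the paper's exponent $d$), the same use of mass conservation to absorb the $L^{2/p}$ factor, and the same temporal H\"older step with exponents $\frac{2}{\gamma}$, $\frac{2}{2-\gamma}$. The only cosmetic difference is that the paper treats the borderline case $n\mu(\al-\frac1k)=np-n+2$ separately rather than letting the H\"older exponent degenerate, which your formula handles anyway since the second factor then carries exponent zero.
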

\begin{proof}
Let $d=\frac{\frac{np}{2}-\frac{np}{2\al k}}{1-\frac n2+\frac{np}{2}}$. We can check that $d\in (0,1)$.
The Gargliardo-Nirenberg inequality implies that 
	\begin{align}
	\nn
	\norm[L^k(\Omega)]{u^{\alpha}}^{\mu} &=\norm[L^{\frac{2\al k}{p}}(\Om)]{u^{\frac{p}{2}}}^{\frac{2\al\mu}{p}}\le c_1\norm[L^2(\Om)]{\nabla u^{\frac{p}{2}}}^{\frac{2\al\mu}{p}d}\norm[L^{\frac{2}{p}}(\Om)]{u^{\frac{p}{2}}}^{\frac{2\al\mu}{p}(1-d)}
	+ c_1 \norm[L^{\frac{2}{p}}(\Om)]{u^{\frac{p}{2}}}^{\frac{2\al\mu}{p}}\\
	\nn
	&\le  c_2 \norm[L^2(\Om)]{\nabla u^{\frac{p}{2}}}^{\frac{2\al\mu}{p}d}+c_2 \fat.
	\end{align}
	Integrating over $(0,t)$, we obtain that
\begin{align}
\nn
\int_0^t e^{-\delta(t-s)} \norm[L^k(\Omega)]{u^\al(\cdot,s)}^\mu ds &\le c_2\int_0^t e^{-\delta(t-s)}
\norm[L^2(\Om)]{\nabla u^{\frac{p}{2}}(\cdot,s)}^{\frac{2\al\mu}{p}d} ds +\frac{c_2}{\delta}.
\end{align}
If $n\mu(\al-\frac 1k)= np-n+2$, this is \eqref{eq:lem:spa_temp}. If $n\mu(\al-\frac 1k)< np-n+2$, meaning that 
$\frac{2\al\mu}{p}d<2$. Applying {H\"older}'s inequality, we see that
\begin{align}
\nn
\int_0^t e^{-\delta(t-s)}
\norm[L^2(\Om)]{\nabla u^{\frac{p}{2}}(\cdot,s)}^{\frac{2\al\mu}{p}d} ds &\le \left(\int_0^t e^{-\delta(t-s)}
 \norm[L^2(\Om)]{\nabla u^{\frac{p}{2}}(\cdot,s)}^2
ds \right)^{\frac{\al\mu}{p}d} \left(\int_0^t e^{-\delta(t-s)} ds \right)^{1-\frac{\al\mu}{p}d}\\
&\le \left(\frac{1}{\delta}\right)^{1-\frac{\al\mu}{p}d}  \left(\int_0^t e^{-\delta(t-s)}
 \norm[L^2(\Om)]{\nabla u^{\frac{p}{2}}(\cdot,s)}^2
ds \right)^{\frac{\al\mu}{p}d}.
\end{align}
\end{proof}
Our next step to handle the cross-diffusion effect involves a priori estimate of $w$. In particular, we focus on the superlinear transmission case, this causes poor information on the source term in the third equation, hence no regularity is known for $w$. However, we can still use interpolation and control spatial-temporal  integration against $u^\alpha$.
\begin{lem}\label{lem:w_u}
Let $\delta\in (0,\min\{1,\frac{2}{2-\lambda},\frac{2}{2-\lambda}b\beta\})$.
 There {exists} $C>0$ such that for all $t\in(0,\Tmax)$, we have
	\begin{align}
	\nn
	&~~~~\int_0^t e^{-\delta(t-s)}\norm[L^{\theta'}(\Omega)]{w(\cdot,s)}^{\frac{2}{2-\lambda}} ds \\
	\nn
&\le C \left( \int_0^t e^{-\delta(t-s)} \norm[L^\tau(\Omega)]{u^\al (\cdot,s)}^{\frac{2}{2-\lambda} b \beta} ds \right)^{\frac{1}{\beta}} 
\left( \int_0^t e^{-\delta(t-s)} \norm[L^1(\Omega)]{u^\al(\cdot,s)}^{\frac{2}{2-\lambda}(1-b) \beta'} ds
\right)^{\frac{1}{\beta'}}\\
\nn
&\quad  + C \left( \int_0^t e^{-\delta(t-s)} \norm[L^\tau(\Omega)]{u^\al (\cdot,s)}^{\frac{2}{2-\lambda} b \beta} ds\right)^{\frac{1}{\beta}} 
+C\left( \int_0^t e^{-\delta(t-s)} \norm[L^1(\Omega)]{u^\al(\cdot,s)}^{\frac{2}{2-\lambda}(1-b) \beta'} ds
\right)^{\frac{1}{\beta'}}\\
&\qquad
+ C\int_0^t e^{-\delta(t-s )} \norm[L^1(\Om)]{u^{\al}(\cdot,s)}^{\frac{2}{2-\lambda}} ds +C  .
	\end{align}
\end{lem}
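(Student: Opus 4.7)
The plan is to bound $\|w(\cdot,s)\|_{L^{\theta'}(\Omega)}$ by Gagliardo--Nirenberg interpolation between $W^{2,\tau}(\Omega)$ and $L^1(\Omega)$, then trade $\|\Delta w\|_{L^\tau(\Omega)}$ for $\|u^\alpha\|_{L^\tau(\Omega)}$ via the maximal Sobolev regularity in Lemma~\ref{lem:maximal} applied to the third equation of \eqref{eq}, and trade $\|w\|_{L^1(\Omega)}$ for a time convolution of $\|u^\alpha\|_{L^1(\Omega)}$ supplied by Lemma~\ref{lem:wL1_u}, which is then untangled by the Fubini-type estimate of Lemma~\ref{lem:fubini}. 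The exponent $b$ fixed in \eqref{para:p} is precisely the Gagliardo--Nirenberg interpolation parameter here, as one checks from the scaling identity $\tfrac{1}{\theta'}=b(\tfrac{1}{\tau}-\tfrac{2}{n})+(1-b)$.

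Concretely, I would start from an inequality of the form
\[
\|w(\cdot,s)\|_{L^{\theta'}(\Omega)} \le c_1 \|\Delta w(\cdot,s)\|_{L^\tau(\Omega)}^b \|w(\cdot,s)\|_{L^1(\Omega)}^{1-b} + c_1 \|w(\cdot,s)\|_{L^1(\Omega)},
\]
raise it to the power $\tfrac{2}{2-\lambda}$, and integrate against $e^{-\delta(t-s)}$. Splitting the resulting mixed contribution by H\"older's inequality in time with the conjugate exponents $\beta,\beta'$ from \eqref{para:<1} isolates one factor involving $\|\Delta w\|_{L^\tau}^{2b\beta/(2-\lambda)}$ and another involving $\|w\|_{L^1}^{2(1-b)\beta'/(2-\lambda)}$. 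The first is handled by Lemma~\ref{lem:maximal} with $q=\tau$, $r=\tfrac{2b\beta}{2-\lambda}$ and $\kappa r=\delta$; the assumption $\delta<\tfrac{2}{2-\lambda}b\beta$ keeps $\kappa\in(0,1)$, and the conclusion is an integral of $\|u^\alpha\|_{L^\tau}^{2b\beta/(2-\lambda)}$ up to a constant depending on $\|w_0\|_{W^{2,\tau}(\Omega)}$. The second is handled by inserting \eqref{eq:lem:wL1_u} to dominate $\|w(\cdot,s)\|_{L^1(\Omega)}$ by $\int_0^s e^{-(s-\sigma)}\|u^\alpha(\cdot,\sigma)\|_{L^1(\Omega)}\,d\sigma+\|w_0\|_{L^1(\Omega)}$, and then invoking Lemma~\ref{lem:fubini} (which requires $\delta<1$) to collapse the double time integral into a single integral of $\|u^\alpha\|_{L^1}^{2(1-b)\beta'/(2-\lambda)}$. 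The pure $L^1$ contribution arising from the additive Gagliardo--Nirenberg lower-order term is treated by the same Lemma~\ref{lem:wL1_u}+Lemma~\ref{lem:fubini} device, producing the fourth right-hand term of the claim.

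To arrive at the exact form of the statement, I would finally use the elementary subadditivity $(X+Y)^{1/p}\le X^{1/p}+Y^{1/p}$ for $p\ge 1$ to separate each of the two H\"older factors into a genuine $u^\alpha$ integral plus an $O(1)$ piece. Multiplying out produces the cross term, the two one-sided terms, and an absorbed constant, matching the right-hand side of the claim term by term. The main obstacle is bookkeeping: one must verify that the compounded exponents $\tfrac{2b\beta}{2-\lambda}$ and $\tfrac{2(1-b)\beta'}{2-\lambda}$ both lie in $[1,\infty)$ so that Lemmas~\ref{lem:maximal} and \ref{lem:fubini} may be invoked, and that the stated range $\delta\in(0,\min\{1,\tfrac{2}{2-\lambda},\tfrac{2}{2-\lambda}b\beta\})$ simultaneously secures $\kappa\in(0,1)$ in the maximal regularity step and $1-\delta>0$ in the Fubini step; both requirements are encoded in the parameter choices of Lemma~\ref{lem:parameters}.
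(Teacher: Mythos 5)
Your proposal is correct and follows essentially the same route as the paper: Gagliardo--Nirenberg interpolation of $\|w\|_{L^{\theta'}}$ between $\|\Delta w\|_{L^\tau}^b$ and $\|w\|_{L^1}^{1-b}$, H\"older in time with exponents $\beta,\beta'$, maximal Sobolev regularity for the $\Delta w$ factor, Lemma~\ref{lem:wL1_u} plus the Fubini-type Lemma~\ref{lem:fubini} for the $L^1$ factors, and subadditivity of $x\mapsto x^{1/\beta'}$ to separate the terms. The exponent checks you flag (the compounded powers exceeding $1$, and the role of $\delta$ in keeping $\kappa\in(0,1)$ and $1-\delta>0$) are exactly the points the paper verifies via Lemma~\ref{lem:parameters}.
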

\begin{proof}
{Invoking the Gargliardo-Nirenberg inequality, we obtain $c_1>0$ such that}
	\begin{align*}
		\norm[L^{\theta'}(\Omega)]{w(\cdot,s)}\le c_1 \norm[L^\tau(\Omega)]{\Delta w(\cdot,s)}^b\norm[L^1(\Omega)]{w(\cdot,s)}^{1-b}+c_1 \norm[L^1(\Omega)]{w(\cdot,s)}  \text{ for all } s\in(0,\Tmax).
	\end{align*}
Multiplying $e^{-\delta(t-s)}$ and integrating over $(0,t)$, applying Young's inequality,  we get
\begin{align}
\label{eq:lem:w_u:1}
\nn
		&~~~~\int_0^t e^{-\delta(t-s)}\norm[L^{\theta'}(\Omega)]{w(\cdot,s)}^{\frac{2}{2-\lambda}} ds  \\
		\nn
		&\le c_1  \int_0^t e^{-\delta(t-s)} \norm[L^\tau(\Omega)]{\Delta w(\cdot,s)}^{\frac{2}{2-\lambda} b}\norm[L^1(\Omega)]{w(\cdot,s)}^{\frac{2}{2-\lambda}(1-b)} ds
		     +c_1\int_0^t e^{-\delta(t-s)}\norm[L^1(\Omega)]{w(\cdot,s)}^{\frac{2}{2-\lambda}} ds\\
\nn
   &\le c_1 \left(  \int_0^t e^{-\delta(t-s)} \norm[L^\tau(\Omega)]{\Delta w(\cdot,s)}^{\frac{2}{2-\lambda} b \beta } ds \right)^{\frac{1}{\beta}} 
   \left( \int_0^t e^{-\delta(t-s)} \norm[L^1(\Omega)]{w(\cdot,s)}^{\frac{2}{2-\lambda}(1-b)\beta'} ds\right)^{\frac{1}{\beta'}}\\
   &\quad\quad  +c_1\int_0^t e^{-\delta(t-s)}\norm[L^1(\Omega)]{w(\cdot,s)}^{\frac{2}{2-\lambda}} ds  \fat.
		\end{align}
	Since $\frac{2}{2-\lambda}(1-b)\beta'>1$ due to \eqref{para:p}, employing Lemmata \ref{lem:wL1_u} and \ref{lem:fubini}, we {find} $c_2>0$ such that
\begin{align}
\label{eq:lem:w_u:2}
\nn
&~~~~\int_0^t e^{-\delta(t-s)} \norm[L^1(\Omega)]{w(\cdot,s)}^{\frac{2}{2-\lambda}(1-b)\beta'} ds \\
\nn
&\le \int_0^t e^{-\delta(t-s)} \left( { \int_0^s e^{-(s-\sigma)} \norm[L^1(\Om)]{u^{\al}(\cdot,\sigma)} +\norm[L^1(\Omega)]{w_0}  d \sigma }\right) ^{\frac{2}{2-\lambda}(1-b)\beta'} ds\\
\nn
&\le  c_2\int_0^t e^{-\delta(t-s)} \left( \int_0^s e^{-(s-\sigma)} \norm[L^1(\Om)]{u^{\al}(\cdot,\sigma)} d \sigma \right) ^{\frac{2}{2-\lambda}(1-b)\beta'} ds + c_2 \int_0^t e^{-\delta(t-s)}  \norm[L^1(\Omega)]{w_0}^{\frac{2}{2-\lambda}(1-b)\beta'}ds \\
&\le \frac{c_2}{1-\delta}  \int_0^t e^{-\delta (t-s)} \norm[L^1(\Om)]{u^{\al}(\cdot,s)}^{\frac{2}{2-\lambda}(1-b)\beta'} ds + \frac{c_2}{\delta}\norm[L^1(\Omega)]{w_0}^{\frac{2}{2-\lambda}(1-b)\beta'}.
\end{align}
{Due to $(x+y)^{\frac{1}{\beta'}}\le x^{\frac{1}{\beta'}}+y^{\frac{1}{\beta'}}$ for $x,y\ge 0$ with $\beta'>1$, we have} 
\begin{align}
\label{eq:lem:w_u:3}
\nn
&~~~~ \left( \int_0^t e^{-\delta(t-s)} \norm[L^1(\Omega)]{w(\cdot,s)}^{\frac{2}{2-\lambda}(1-b)\beta'} ds\right)^{\frac{1}{\beta'}}\\
& \le \left( \frac{c_2}{1-\delta}\right)^{\frac{1}{\beta'}}  \left(\int_0^t e^{-\delta (t-s)} \norm[L^1(\Om)]{u^{\al}(\cdot,s)}^{\frac{2}{2-\lambda}(1-b)\beta'} ds  \right)^{\frac{1}{\beta'}}
 +\left(\frac{c_2}{\delta}\right)^{\frac{1}{\beta'}}\norm[L^1(\Omega)]{w_0}^{\frac{2}{2-\lambda}(1-b)}.
\end{align}
Similar to \eqref{eq:lem:w_u:2}, since $\frac{2}{2-\lambda}>1$, Lemma \ref{lem:fubini} is applicable to yield that
	\begin{align}
	\label{eq:lem:w_u:4}
	\int_0^t e^{-\delta(t-s)}\norm[L^1(\Omega)]{w(\cdot,s)}^{\frac{2}{2-\lambda}} ds \le \frac{c_3}{1-\delta} \int_0^t e^{-\delta(t-s)} \norm[L^1(\Om)]{u^{\al}(\cdot,s)}^{\frac{2}{2-\lambda}} ds +\frac{c_3}{\delta} \norm[L^1(\Omega)]{w_0}^{\frac{2}{2-\lambda}}.
	\end{align}
	An application of Lemma \ref{lem:maximal} implies $c_4>0$ such that
	\begin{align}
	\label{eq:lem:w_u:5}
 \int_0^t e^{-\delta(t-s)} \norm[L^\tau(\Omega)]{\Delta w(\cdot,s)}^{\frac{2}{2-\lambda} b \beta } ds \le c_4 	 \int_0^t e^{-\delta(t-s)} \norm[L^\tau(\Omega)]{ u^\al(\cdot,s)}^{\frac{2}{2-\lambda} b \beta } ds+c_4
 \norm[W^{2,\infty}(\Omega)]{w_0}^{\frac{2}{2-\lambda} b \beta}.
	\end{align}
Substituting \eqref{eq:lem:w_u:3},\eqref{eq:lem:w_u:4} and \eqref{eq:lem:w_u:5} into \eqref{eq:lem:w_u:1}, {the assertion follows}.		
\end{proof}
 
Combining Lemmata \ref{lem:w_u} and \ref{lem:u_nablau}, we can readily control cross-diffusion by dissipation of $u$.
\begin{lem}\label{lem:laplacianv_nablau}
Let $\delta\in (0,\min\{1,\frac{2}{2-\lambda},\frac{2}{2-\lambda}b\beta\})$.
For any $\eps>0$, there is $C>0$ such that
\begin{align}
\int_0^t e^{-\delta(t-s)} \norm[L^{\theta'}(\Om)]{\Delta v(\cdot,s)}^{\frac{2}{2-\lambda}} ds \le  \eps \int_0^t e^{-\delta(t-s)} \norm[L^2(\Om)]{\nabla u^{\frac p2}(\cdot,s)}^2 ds +C \fat.
\end{align}
\end{lem}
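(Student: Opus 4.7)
The plan is to run a three-stage reduction: first from $\Delta v$ to $w$ via maximal Sobolev regularity, then from $w$ to $u^\alpha$ via Lemma \ref{lem:w_u}, and finally from $u^\alpha$ to $\|\nabla u^{p/2}\|_{L^2}^{2}$ via Lemma \ref{lem:u_nablau}. Young's inequality absorbs the remainder, thanks to the strict inequalities $\beta\eta_1<1$, $\beta'\eta_2<1$, $\eta_3<1$ and $\eta_1+\eta_2<1$ from Lemma \ref{lem:parameters}.

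First I would apply Lemma \ref{lem:maximal} to the equation $v_t=\Delta v-v+w$ with $q=\theta'$, $r=\tfrac{2}{2-\lambda}$ and $\kappa=\tfrac{(2-\lambda)\delta}{2}\in(0,1)$ (this is the reason for the hypothesis $\delta<\tfrac{2}{2-\lambda}$). This gives a constant $c_1>0$ with
\[
\int_0^t e^{-\delta(t-s)}\|\Delta v(\cdot,s)\|_{L^{\theta'}(\Omega)}^{\frac{2}{2-\lambda}}\,ds\le c_1\int_0^t e^{-\delta(t-s)}\|w(\cdot,s)\|_{L^{\theta'}(\Omega)}^{\frac{2}{2-\lambda}}\,ds+c_1\|v_0\|_{W^{2,\theta'}(\Omega)}^{\frac{2}{2-\lambda}}.
\]
Then I would insert Lemma \ref{lem:w_u} to replace the $L^{\theta'}$-norm of $w$ by space-time integrals of $\|u^\alpha\|_{L^\tau}^{\frac{2b\beta}{2-\lambda}}$, $\|u^\alpha\|_{L^1}^{\frac{2(1-b)\beta'}{2-\lambda}}$, and $\|u^\alpha\|_{L^1}^{\frac{2}{2-\lambda}}$, up to a constant.

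Next I would invoke Lemma \ref{lem:u_nablau} three times, writing $I(t):=\int_0^t e^{-\delta(t-s)}\|\nabla u^{p/2}(\cdot,s)\|_{L^2(\Omega)}^{2}\,ds$ for brevity. With $(k,\mu)=(\tau,\tfrac{2b\beta}{2-\lambda})$ the exponent on the right-hand side is precisely $\beta\eta_1$; with $(k,\mu)=(1,\tfrac{2(1-b)\beta'}{2-\lambda})$ it is $\beta'\eta_2$; and with $(k,\mu)=(1,\tfrac{2}{2-\lambda})$ it is $\eta_3$. The admissibility conditions $k>1/\alpha$, $k<np/(\alpha(n-2)_+)$ and $n\mu(\alpha-1/k)\le np-n+2$ are exactly the constraints packaged into \eqref{para:tau}--\eqref{para:<1}. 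Consequently there exists $c_2>0$ such that each of the three integrals is bounded by $c_2(I(t)^{\beta\eta_1}+1)$, $c_2(I(t)^{\beta'\eta_2}+1)$, $c_2(I(t)^{\eta_3}+1)$ respectively.

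Using subadditivity $(x+y)^{1/\beta}\le x^{1/\beta}+y^{1/\beta}$ (and likewise for $1/\beta'$), together with the product structure of Lemma \ref{lem:w_u}, every surviving term is of the form $I(t)^{\eta}+1$ with $\eta\in\{\eta_1,\eta_2,\eta_1+\eta_2,\eta_3\}$, each strictly less than $1$ by \eqref{para:eta} and \eqref{para:<1}. A standard application of Young's inequality then yields, for any $\varepsilon>0$, a constant $C(\varepsilon)>0$ so that all these contributions are dominated by $\varepsilon I(t)+C(\varepsilon)$, which is the asserted bound.

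The main technical point to watch is the bookkeeping of exponents: one must verify that the powers of $I(t)$ produced by feeding Lemma \ref{lem:u_nablau} into Lemma \ref{lem:w_u} combine (via the $1/\beta$, $1/\beta'$ outer exponents) to precisely $\eta_1+\eta_2$, $\eta_1$, $\eta_2$ and $\eta_3$, and that all the conditions $k>1/\alpha$, $k<np/(\alpha(n-2)_+)$ in Lemma \ref{lem:u_nablau} are met. Both are exactly the content of the parameter choices in Lemma \ref{lem:parameters}, so no further restriction on $\delta$ beyond $\delta<\min\{1,\tfrac{2}{2-\lambda},\tfrac{2b\beta}{2-\lambda}\}$ is needed.
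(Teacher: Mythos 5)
Your proposal is correct and follows essentially the same route as the paper: maximal Sobolev regularity for the $v$-equation, then Lemma \ref{lem:w_u}, then three applications of Lemma \ref{lem:u_nablau} with exactly the paper's choices of $(k,\mu)$, finished by Young's inequality using $\eta_1+\eta_2<1$ and $\eta_i<1$. The exponent bookkeeping ($\beta\eta_1$, $\beta'\eta_2$, $\eta_3$ collapsing to $\eta_1$, $\eta_2$, $\eta_1+\eta_2$, $\eta_3$ after the outer $1/\beta$, $1/\beta'$ powers) and the role of $\delta<\frac{2}{2-\lambda}$ in choosing $\kappa$ match the paper's argument.
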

\begin{proof}
Thanks to \eqref{para:p}, it holds that $\frac{1}{\al}<\tau< \frac{np}{\alpha(n-2)_+}$ and $\frac{1}{\al}<1< \frac{np}{\alpha(n-2)_+}$, also \eqref{para:<1} ensures $ \frac{n (\frac{2}{2-\lambda}b\beta)(\al-\frac 1\tau)}{ np-n+2}=\beta\eta_1< 1$ and 
$\frac{n\mu (\frac{2}{2-\lambda})(1-b)\beta'(\al-1)}{ np-n+2}=\beta'\eta_2< 1$, together with the fact that $\eta_3<1$ from \eqref{para:eta}, we invoke Lemma \ref{lem:u_nablau} to derive
\begin{align*}
&\int_0^t e^{-\delta(t-s)} \norm[L^\tau(\Omega)]{u^\al (\cdot,s)}^{\frac{2}{2-\lambda} b \beta} ds
\le c_1 \left( \int_0^t e^{-\delta(t-s)} \norm[L^2(\Om)]{\nabla u^{\frac p2}(\cdot,s)}^2 ds \right)^{\beta\eta_1}+c_1,\\
& \int_0^t e^{-\delta(t-s)} \norm[L^1(\Omega)]{u^\al(\cdot,s)}^{\frac{2}{2-\lambda}(1-b) \beta'} ds
\le c_2 \left( \int_0^t e^{-\delta(t-s)} \norm[L^2(\Om)]{\nabla u^{\frac p2}(\cdot,s)}^2 ds \right)^{\beta'\eta_2} +c_2,\\
\text{ and } 
&\int_0^t e^{-\delta(t-s )} \norm[L^1(\Om)]{u^{\al}(\cdot,s)}^{\frac{2}{2-\lambda}} {ds}\le  c_3\left( \int_0^t e^{-\delta(t-s)} \norm[L^2(\Om)]{\nabla u^{\frac p2}(\cdot,s)}^2 ds \right)^{\eta_3} +c_3.
\end{align*}
Applying Lemmata \ref{lem:maximal} and \ref{lem:w_u}, we obtain $c_4, c_5>0$ such that
\begin{align}
\nn
&~~~~\int_0^t e^{-\delta(t-s)} \norm[L^{\theta'}(\Om)]{\Delta v(\cdot,s)}^{\frac{2}{2-\lambda}} ds \\
\nn
&\le c_4\int_0^t e^{-\delta(t-s)} \norm[L^{\theta'}(\Om)]{w(\cdot,s)}^{\frac{2}{2-\lambda}} ds+c_4\norm[W^{2,\infty}(\Omega)]{v_0}^{\frac{2}{2-\lambda}}\\
\nn
&\le c_5 \left( \int_0^t e^{-\delta(t-s)} \norm[L^2(\Om)]{\nabla u^{\frac p2}(\cdot,s)}^2 ds \right)^{\eta_1+\eta_2}+c_5 \left( \int_0^t e^{-\delta(t-s)} \norm[L^2(\Om)]{\nabla u^{\frac p2}(\cdot,s)}^2 ds \right)^{\eta_1}\\
&\qquad +c_5 \left( \int_0^t e^{-\delta(t-s)} \norm[L^2(\Om)]{\nabla u^{\frac p2}(\cdot,s)}^2 ds \right)^{\eta_2} +c_5\left( \int_0^t e^{-\delta(t-s)} \norm[L^2(\Om)]{\nabla u^{\frac p2}(\cdot,s)}^2 ds \right)^{\eta_3}+ c_5.\end{align}
Since $0<\eta_1+\eta_2<1$ and $0<\eta_i<1$ for $i=1,2,3$ have been confirmed in Lemma \ref{lem:parameters}, we conclude the assertion by applying Young's inequality.
\end{proof}

We are now prepared to close the loop by employing Lemma \ref{lem:laplacianv_nablau} together with an $L^p$ testing argument with sufficiently large $p$.
\begin{lem}\label{lem:u_Lp}
	Let $n\le 3$. {Assume that} $1<\al<\min\{\frac 4n,1+\frac 2n\}$. For any 
	\[ p> \max\left\{1,\frac{\al(n-2)_+}{n}, \frac{\alpha\tau (n-2)_+}{n},\frac{\frac 2n-\frac{1}{\theta'}}{1-b}+1-\frac 2n\right\},\]
	there is $C>0$ such that 
	\begin{align}
	\into u^p(\cdot,t)\le C  \fat.
	\end{align}
\end{lem}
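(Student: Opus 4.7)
The plan is to couple an $L^p$-testing against the first equation of \eqref{eq} (with $k_1=k_2=0$) with the maximal-regularity-based estimate from Lemma \ref{lem:laplacianv_nablau}, thereby closing the loop that was set up in Lemmata \ref{lem:parameters}--\ref{lem:laplacianv_nablau}. Concretely, I would multiply $u_t=\Lap u-\na\cdot(u\na v)$ by $p u^{p-1}$, integrate over $\Om$ and integrate by parts twice in the chemotactic term to obtain
\begin{align*}
\frac{d}{dt}\into u^p+\frac{4(p-1)}{p}\into|\na u^{\frac p2}|^2 = -(p-1)\into u^p \Lap v \le (p-1)\into u^p |\Lap v|.
\end{align*}

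Hölder's inequality with the conjugate exponents $\theta,\theta'$ supplied by Lemma \ref{lem:parameters} controls the right-hand side by $(p-1)\|u^{\frac p2}\|_{L^{2\theta}(\Om)}^2\|\Lap v\|_{L^{\theta'}(\Om)}$. An application of the Gagliardo-Nirenberg inequality to $\|u^{\frac p2}\|_{L^{2\theta}(\Om)}^2$, with interpolation exponent chosen to coincide with $a$ from \eqref{para:a_b_lambda}, together with the mass conservation $\|u(\cdot,s)\|_{L^1(\Om)}=\|u_0\|_{L^1(\Om)}$ from Lemma \ref{lem:wL1_u}, yields
\begin{align*}
\|u^{\frac p2}\|_{L^{2\theta}(\Om)}^2 \le c\|\na u^{\frac p2}\|_{L^2(\Om)}^{\lambda}+c,
\end{align*}
with $\lambda=2a\in(0,2)$ guaranteed by $\frac{1}{\theta'}<\frac{2}{n}$ from \eqref{para:theta}. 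Young's inequality with exponents $\tfrac 2\lambda,\tfrac{2}{2-\lambda}$ then absorbs part of the gradient dissipation and leaves a $C\|\Lap v\|_{L^{\theta'}(\Om)}^{2/(2-\lambda)}$ term on the right.

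Next I would fix $\delta\in(0,\min\{1,\tfrac{2}{2-\lambda},\tfrac{2}{2-\lambda}b\beta\})$ admissible for Lemma \ref{lem:laplacianv_nablau} and augment both sides by $\delta\into u^p$; a further Gagliardo-Nirenberg/Young step gives $\delta\into u^p\le \tfrac{p-1}{p}\into|\na u^{\frac p2}|^2+C$ since the corresponding interpolation exponent is strictly less than one for $p>1$ and $n\le 3$. After absorbing, multiplying by $e^{\delta s}$ and integrating over $(0,t)$ converts the differential inequality into
\begin{align*}
\into u^p(\cdot,t)+c_0\int_0^t e^{-\delta(t-s)}\into|\na u^{\frac p2}(\cdot,s)|^2 ds \le C\int_0^t e^{-\delta(t-s)}\|\Lap v(\cdot,s)\|_{L^{\theta'}(\Om)}^{2/(2-\lambda)}ds+C.
\end{align*}
Invoking Lemma \ref{lem:laplacianv_nablau} with $\varepsilon$ small enough that the resulting gradient term on the right is at most $\tfrac{c_0}{2}\int_0^t e^{-\delta(t-s)}\|\na u^{\frac p2}\|_{L^2(\Om)}^2 ds$ yields $\into u^p(\cdot,t)\le C$ uniformly in $t\in(0,\Tmax)$.

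The main obstacle is only of a bookkeeping nature: one must verify that the Gagliardo-Nirenberg interpolation exponent for $\|u^{\frac p2}\|_{L^{2\theta}(\Om)}$ matches $a$ from \eqref{para:a_b_lambda} and that $\lambda=2a<2$, both of which rest on the chain $\al-\frac{2}{n}<\frac{1}{\tau}-\frac{1}{\theta'}<\frac{2}{n}$ baked into \eqref{para:tau}-\eqref{para:theta}, as well as confirming that the admissible window for $\delta$ is non-empty. The genuinely non-trivial analytical difficulty, namely the lack of any direct regularity for $w$ in the superlinear regime, has already been absorbed into Lemma \ref{lem:laplacianv_nablau}.
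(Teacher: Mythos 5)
Your proposal is correct and follows essentially the same route as the paper's own proof: $L^p$ testing, H\"older with the pair $(\theta,\theta')$, Gagliardo--Nirenberg with exponent $a$ so that $\lambda=2a<2$, Young's inequality, insertion of $\delta\into u^p$, the variation-of-constants formula, and finally absorption via Lemma \ref{lem:laplacianv_nablau} with small $\eps$. No substantive differences.
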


\begin{proof}
We test the first equation in \eqref{eq} with $pu^{p-1}$, integrate by parts, 
	\begin{align}
	\label{eq:lem:lp:1}
		\frac{d}{dt}\into u^p+\frac{4(p-1)}{p}\into |\nabla u^{\frac p2}|^2= -(p-1)\into u^p\Delta v\le (p-1)\left(\into u^{p\theta}\right)^{\frac{1}{\theta}} \norm[L^{\theta'}(\Om)]{\Delta v} \fat.
	\end{align}
	The Gagliardo-Nirenberg inequality
yields the existence of $c_1>0$ such that
\begin{align*}\nn
 (p-1)\left(\io u^{p\theta}\right)^\frac {1}{\theta}
 &=(p-1)\norm[L^{2\theta}(\Om)]{u^\frac{p}{2}}^2
 \le c_1\norm[L^2(\Om)]{\nabla u^{\frac p2}}^{2a}
 \norm[L^{\frac 2p } (\Om)]{u^\frac p2}^{2(1-a)}
 +c_1\norm[L^{\frac{2}{p}}(\Om)]{u^\frac{p}{2}}^2\\
 &\le c_2 \norm[L^2(\Om)]{\nabla u^{\frac{p}{2}}}^{\lambda}+c_2.
\end{align*}
Therefore, Young's inequality implies existence of $c_3>0$ {fulfilling}
\begin{align}
\label{eq:lem:lp:2}
(p-1)\left(\into u^{p\theta}\right)^{\frac{1}{\theta}} \norm[L^{\theta'}(\Om)]{\Delta v}
\le \frac{(p-1)}{p}\into |\nabla u^{\frac p2}|^2+c_3\norm[L^{\theta'}(\Om)]{\Delta v}^{\frac{2}{2-\lambda}}+c_3.
\end{align}
Fixing $\delta\in (0,\min\{1,\frac{2}{2-\lambda},\frac{2}{2-\lambda}b\beta\})$,
we derive from {the} Gagliardo-Nirenberg inequality to see that
\begin{align}
\label{eq:lem:lp:3}
\delta \into u^p\le \frac{(p-1)}{p}\into |\nabla u^{\frac p2}|^2+c_4.
\end{align}
From \eqref{eq:lem:lp:1}-\eqref{eq:lem:lp:3}, we obtain that
\begin{align}
\frac{d}{dt}\into u^p+\delta\into u^p+\frac{2(p-1)}{p}\into |\nabla u^{\frac{p}{2}}|^2\le c_3\norm[L^{\theta'}(\Om)]{\Delta v}^{\frac{2}{2-\lambda}}
+c_4 \fat.
\end{align}
By applying variation of constants formula and Lemma \ref{lem:maximal} with $\eps=\frac{p-1}{p}>0$, we derive that
\begin{align*}
&~~~~\into u^p(\cdot,t) + \frac{2(p-1)}{p}\int_0^t e^{-\delta(t-s)} \norm[L^2(\Om)]{\nabla u^{\frac p2}(\cdot,s)}^2 ds\\
&\le 
c_3 \int_0^t e^{-\delta(t-s)} \norm[L^{\theta'}(\Om)]{\Delta v(\cdot,s)}^{\frac{2}{2-\lambda}} ds+ \frac{c_4}{\delta} +e^{-\delta t} \into u_0^p \fat.
\end{align*}
{Employing} Lemma \ref{lem:laplacianv_nablau} with $\eps=\frac{p-1}{p}\cdot\frac{1}{c_3}>0$ leads to the assertion.
\end{proof}

\begin{proof}[Proof of Theorem \ref{thm}]
	According to Lemma \ref{lem:u_Lp} and \eqref{eq:lem:uL1}, if $\al<\min\{\frac 4n,1+\frac 2n\}$, then
	\begin{align}
	\sup_{t\in(0,\Tmax)} \norm[L^1(\Omega)]{f(u(\cdot,t ))}<\infty.
	\end{align}
This implies that $\sup_{t\in(0,\Tmax)} \norm[L^\infty(\Omega)]{u(\cdot,t)}<\infty$ by well-established semigroup estimates. Applying Lemma \ref{lem:locexist}, we conclude global existence of the solution and the regularity therein.
\end{proof}

\section*{Acknowledgment} 
The author acknowledges support of the Natural Science Foundation of Shanghai (Project:23ZR1400100).

\def\cprime{$'$}

\end{document}